\def\lk{\mbox{lk}}
\def\st{\mbox{st}}
\def\st{\mathop{\fam0 st}}
\def\lk{\mathop{\fam0 lk}}
\def\R{{\mathbb R}} \def\Z{{\mathbb Z}}
\newcommand{\jonly}[1]{}
\newcommand{\aronly}[1]{#1}
    \theoremstyle{theorem}
         \newtheorem{theorem}{Theorem}
         \newtheorem{lemma}[theorem]{Lemma}
         \newtheorem{corollary}[theorem]{Corollary}
    \theoremstyle{definition}
         \newtheorem{remark}[theorem]{Remark}
\begin{document}

\title{Short proofs \\ of Tverberg-type theorems for cell complexes}

\author{R. Karasev and A. Skopenkov}

\thanks{R. Karasev: \texttt{http://www.rkarasev.ru/en/about/}.
\newline
A. Skopenkov: \texttt{https://users.mccme.ru/skopenko}.
\newline
We are grateful to S. Avvakumov and the anonymous referee for useful discussions.
\jonly{\newline
In this journal version some remarks (minor or less relevant to this note)
from the arXiv version are omitted.}
\newline
MSC: 52B70, 57Q35, 55T10.}


\date{}

\maketitle

\abstract
We present short proofs of Tverberg-type theorems for cell complexes by S. Hasui, D. Kishimoto, M. Takeda, and M. Tsutaya.
One of them states that for any prime power $r$, any complex $X$ topologically homeomorphic to $S^{(d+1)(r-1)-1}$,
and any continuous map $f:X\to\R^d$ there are pairwise disjoint faces $\sigma_1,\ldots,\sigma_r$ of $X$ such that $f(\sigma_1)\cap\ldots\cap f(\sigma_r)\ne\emptyset$.
\endabstract

We present short proofs of Tverberg-type theorems for cell complexes
(\cite[Theorem 1.4 and Corollary 1.5]{HKTT} quoted as Corollary \ref{c:simsph} and Theorem \ref{t:tvco} below).

We abbreviate `finite simplicial complex' to `complex'.
We mostly omit `continuous' for maps and group actions.
Denote by $\Delta_n$ the simplex of dimension $n$.

A map  $f\colon X\to \R^d$ from a complex $X$ is said to be an \emph{almost $r$-embedding} if
\linebreak
$f(\sigma_1)\cap\ldots\cap f(\sigma_r)=\emptyset$ whenever $\sigma_1,\ldots,\sigma_r$ are pairwise disjoint faces of $X$.

The topological Tverberg theorem states that \emph{if $r$ is a prime power, then the simplex $\Delta_{(d+1)(r-1)}$
(or, equivalently, its boundary) has no almost $r$-embeddings to $\R^d$}.
This was proved by B\'ar\'any--Schlossman--Sz\H{u}cs and \"Ozaydin--Volovikov, see surveys \cite{BZ16, Sk16} and the references therein.
(A counterexample for $r$ not a prime power follows from results of \"Ozaydin, Mabillard--Wagner, and Gromov--Blagojevi\'c--Frick--Ziegler; for the details, see the surveys \cite{BZ16, Sk16} and the references therein\aronly{; cf. Remark \ref{r:hktt}.c}.)
The topological Tverberg theorem was generalized as follows.

\begin{corollary}[{\cite[Corollary 1.5]{HKTT}}]\label{c:simsph}
If $r$ is a prime power, then no complex topologically homeomorphic to $S^{(d+1)(r-1)-1}$
has an almost $r$-embeddings to $\R^d$.
\end{corollary}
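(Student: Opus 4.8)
The plan is to run the standard configuration-space/test-map scheme on the $r$-fold deleted join and to reduce the statement to a single connectivity estimate. Write $N=(d+1)(r-1)$ and $r=p^\ell$ with $p$ prime, and let $G=(\Z/p)^\ell$ act freely on the index set $\{1,\dots,r\}$ by identifying it with $G$ via the regular action. First I would suppose, for contradiction, that some continuous $f\colon X\to\R^d$ is an almost $r$-embedding. On the $r$-fold deleted join $X^{*r}_\Delta$ (formal joins $\bigoplus_{i=1}^r t_ix_i$ with the carriers of the active $x_i$ pairwise disjoint) I would define the $G$-equivariant test map sending $\bigoplus t_ix_i$ to the projection of $(t_1\bar f(x_1),\dots,t_r\bar f(x_r))$ to the orthogonal complement of the diagonal in $(\R^{d+1})^{r}$, where $\bar f(x)=(f(x),1)$. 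Equality of all coordinates would force $t_1=\dots=t_r=1/r$ and $f(x_1)=\dots=f(x_r)$, contradicting the almost-$r$-embedding property; hence the image lies in $W^{\oplus(d+1)}\setminus\{0\}$, where $W$ is the reduced regular representation of $G$. Radial normalization then yields a $G$-map $X^{*r}_\Delta\to S(W^{\oplus(d+1)})=S^{N-1}$, and the $G$-action on the target is free because $G$ acts freely on $W$. So it suffices to show no such $G$-map exists.

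The heart of the argument is the following connectivity estimate, which I would isolate as a lemma: for any complex $X$ with $|X|$ homeomorphic to $S^{N-1}$, the deleted join satisfies $\widetilde H^i(X^{*r}_\Delta;\mathbb{F}_p)=0$ for all $i\le N-1$. The plan is to prove this by Alexander duality inside the ordinary join. Since a join of topological $(N-1)$-spheres is a topological $(rN-1)$-sphere, $|X^{*r}|\cong S^{rN-1}$, and $X^{*r}_\Delta$ is a subcomplex. Alexander duality with $\mathbb{F}_p$-coefficients gives $\widetilde H^i(X^{*r}_\Delta)\cong\widetilde H_{\,rN-2-i}(S^{rN-1}\setminus X^{*r}_\Delta)$, so the claim is equivalent to the vanishing of the reduced homology of the complement in all degrees $\ge (r-1)N-1$.

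The hard part will be this homology computation for the complement $C=S^{rN-1}\setminus|X^{*r}_\Delta|$, the ``generalized diagonal'' consisting of joins in which two active carriers share a vertex. I would cover $C$ by the subsets $C_{ij}$ where the carriers of $x_i$ and $x_j$ meet and run a Mayer--Vietoris/nerve argument; each $C_{ij}$ deformation retracts onto a join of a diagonal-type locus of $X$ with further copies of $X$, and the essential input is that $X$ is a homology $(N-1)$-sphere, so these diagonal loci have the expected codimension and force $\widetilde H_{\ge(r-1)N-1}(C)$ to vanish. The delicate point, and the reason the hypothesis is only topological homeomorphism, is that $X$ need not be a PL or combinatorial sphere: its vertex links are merely homology spheres, so the argument must use only the homology-manifold structure of $X$ and never that links are spheres or that common subdivisions exist. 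This is the step I expect to require the most care, and it is presumably where a genuinely short proof must be clever; I would keep the simplex case $X=\partial\Delta_N$, where $X^{*r}_\Delta$ is $(N-1)$-connected with homology concentrated in degree $N$, as a consistency check that the duality degree $(r-1)N-1$ is sharp.

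Finally, I would conclude by a Borsuk--Ulam-type theorem for elementary abelian $p$-groups. Given the lemma, $X^{*r}_\Delta$ is a free $G$-complex whose reduced $\mathbb{F}_p$-cohomology vanishes through degree $N-1$, while $S^{N-1}$ is a free $G$-space of dimension $N-1$; Volovikov's theorem (for $\ell=1$ already Dold's theorem) then forbids a $G$-map $X^{*r}_\Delta\to S^{N-1}$, contradicting the test map constructed above. Working with the cohomological (Fadell--Husseini/Volovikov index) form of this theorem lets me use only $\mathbb{F}_p$-cohomology and avoids any separate discussion of $\pi_1$ of the deleted join. Hence no almost $r$-embedding exists, proving Corollary \ref{c:simsph}.
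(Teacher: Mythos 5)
Your test map and the reduction to ``no equivariant map exists'' are set up correctly, but the key lemma on which everything rests is false, and the failure is fatal rather than technical. Take the smallest case $r=2$ (so $a=2$, $N=d+1$) and $X=\partial\Delta_N$, which is certainly homeomorphic to $S^{N-1}$. The deleted join of the \emph{full} simplex $\Delta_N$ is $[2]^{*(N+1)}\cong S^N$, realized as the boundary of the $(N+1)$-dimensional cross-polytope; the deleted join of $\partial\Delta_N$ is obtained from it by deleting exactly the two \emph{antipodal} facets corresponding to $(\Delta_N,\emptyset)$ and $(\emptyset,\Delta_N)$. The long exact sequence of this pair with $\Z_2$ coefficients reads $\Z_2=H_N(S^N)\to H_N(S^N,X^{*2}_\Delta)\cong\Z_2^2\to \widetilde H_{N-1}(X^{*2}_\Delta)\to 0$, and the first map sends the fundamental class to $(1,1)$; hence $\widetilde H_{N-1}(X^{*2}_\Delta;\Z_2)\cong\Z_2\neq0$, so the vanishing through degree $N-1$ that you need fails in the top required degree. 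Your ``consistency check'' conflates the deleted join of $\Delta_N$ (which is indeed $(N-1)$-connected --- this is what the classical proof of the topological Tverberg theorem uses) with the deleted join of $\partial\Delta_N$; they differ by exactly those $r$ top cells, and removing them creates $(N-1)$-dimensional homology. Worse, the gap cannot be patched by a cleverer complement computation or a stronger Borsuk--Ulam theorem: orthogonal projection along the axis through the barycenters of the two deleted facets gives an explicit $\Z_2$-equivariant map $X^{*2}_\Delta\to S^{N-1}$ with both actions free, so \emph{no} index-type theorem (Dold, Volovikov, Fadell--Husseini) can forbid such a map. The deleted join of a sphere complex is simply the wrong configuration space for this statement.

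This is precisely why the paper (following \cite{HKTT}) works with the deleted \emph{product} $X^{\underline r}$: the associated target sphere (the unit sphere in the complement of the diagonal of $(\R^d)^r$, with no join coordinates) has dimension $n=d(r-1)-1$, which is smaller by $r-1$ than your $N-1$, so one only needs $X^{\underline r}$ to be $n$-acyclic --- and that weaker acyclicity \emph{does} hold for complexes homeomorphic to $S^{(d+1)(r-1)-1}$. The paper proves it by induction on $r$ via a spectral sequence for the projection $\pi:X^{\underline r}\to X$ (Lemma \ref{l:crcon}), using as input the $(r-1)$-complementary $n$-acyclicity of $X$ (Theorem \ref{t:tvco}); that combinatorial condition is in turn verified for topological spheres by Alexander duality in $\cite[Proposition 2.3]{HKTT}$ --- which is where your correct instinct to use duality inside the sphere actually belongs: it is applied to the subcomplexes $X-\sigma_1-\ldots-\sigma_i$, not to the deleted join. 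If you want to salvage your write-up, replace the deleted join by the deleted product, lower the target to $S^{d(r-1)-1}$, and prove the acyclicity of $X^{\underline r}$ through that degree.
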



Corollary \ref{c:simsph} proves for prime powers the Tverberg--B\'ar\'any--Kalai conjecture stated in
\cite[before question 1.2]{HKTT}, \cite[Conjecture 1.2]{SZ24}.
For non-prime-powers that conjecture concerns maps \emph{linear} on faces, so it remains open.
The PL version of Corollary \ref{c:simsph} immediately follows from the topological Tverberg theorem and Lemma \ref{l:subdiv}.a below.

Corollary \ref{c:simsph} follows from Theorem \ref{t:tvco} (\cite[Theorem 1.4]{HKTT})
by \cite[Proposition 2.3]{HKTT} (which we do not state).
We simplify the proof of Theorem \ref{t:tvco} \cite[\S3, \S4]{HKTT}.

Before we state Theorem \ref{t:tvco}, in this paragraph we describe how to naturally invent it.
(This description, and the terminology it uses, is not used in the statements below.)
The idea is to use analogy to the \"Ozaydin--Volovikov proof of the topological Tverberg theorem.
Namely, to use a lemma of \"Ozaydin--Volovikov (quoted below, \cite[Lemma]{vo96}), and mimick the spectral sequence argument proving high enough acyclicity of the total
space of a bundle, in the situation when a
map is not a bundle.
There naturally appears a property of a simplicial sphere (complementary acyclicity defined in \cite[\S1]{HKTT} and below) allowing to carry this argument.
Thus the proof of Theorem \ref{t:tvco} is an easy combination of (non-trivial) known results and methods, see details in  Remark \ref{r:hktt}\aronly{.ab} and footnote \ref{f:spectr}.

Let $X$ be a complex.
Let $X-\sigma$ be the subcomplex of $X$ consisting of faces which do not intersect $\sigma$.
This is not to be confused with $X\setminus\sigma$.
We take a prime $a$ and $r=a^m$; we
omit coefficients $\Z_a$
from the notation of homology groups.

For integers $n\ge-1$ and $s\ge1$ a complex $X$ is said to be

$\bullet$ \emph{$n$-acyclic} if $X$ is non-empty and $\widetilde H_j(X)=0$ for every $j\le n$.

$\bullet$ \emph{$s$-complementary $n$-acyclic} if
for every $i=0,1,\ldots,s$, and pairwise disjoint faces $\sigma_1,\ldots,\sigma_i$ of $X$,
the complex $X-\sigma_1-\ldots-\sigma_i$ is $(n-\dim\sigma_1-\ldots-\dim\sigma_i)$-acyclic.

Set
$$n:=d(r-1)-1.$$

\begin{theorem}[{\cite[Theorem 1.4]{HKTT}}]\label{t:tvco} If $r$ is a power of a prime, then no $(r-1)$-complementary $n$-acyclic complex has
an almost $r$-embedding to $\R^d$.
\end{theorem}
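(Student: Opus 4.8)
The plan is to run the configuration-space/test-map scheme in the form used by \"Ozaydin and Volovikov, replacing the high connectivity of the deleted product of a simplex by the complementary acyclicity hypothesis, and finishing with Volovikov's lemma \cite{vo96}. For a complex $Y$ and $s\ge1$ let $Y^s_\Delta$ denote the \emph{deleted product}, i.e.\ the subspace of $|Y|^s$ of those $s$-tuples $(x_1,\dots,x_s)$ whose carriers (the open simplices containing the points) are pairwise disjoint; this is a compact polyhedron. Suppose to the contrary that $f\colon X\to\R^d$ is an almost $r$-embedding, and set $E:=X^r_\Delta$. The group $G:=(\Z_a)^m$ of order $r$ acts on $\{1,\dots,r\}$ by translations, hence on $|X|^r$ by permuting coordinates, preserving $E$; the action on $E$ is free, since disjoint carriers force distinct points, so no nontrivial coordinate permutation can fix a point of $E$. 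Sending $(x_1,\dots,x_r)$ to the image of $(f(x_1),\dots,f(x_r))$ in the quotient $W$ of $(\R^d)^r$ by its diagonal is a $G$-map; since $f$ is an almost $r$-embedding this image never vanishes, so after normalization I obtain a $G$-map $E\to S(W)=S^n$, where $\dim_\R W=d(r-1)$ and $G$ acts on $S(W)$ without fixed points because $W^G=0$.

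\textbf{Main step: $E$ is $n$-acyclic.} I would deduce this from the following statement, proved by induction on $t\ge0$: \emph{if $Y$ is $t$-complementary $m$-acyclic, then $Y^{t+1}_\Delta$ is $m$-acyclic.} The case $t=0$ is exactly the hypothesis applied to $Y=Y^1_\Delta$. For the inductive step consider the forgetful map $\pi\colon Y^{t+1}_\Delta\to Y$, $(x_1,\dots,x_{t+1})\mapsto x_1$. Over an open simplex $\mathrm{int}\,\sigma$ the carrier of the first coordinate is constantly $\sigma$, so $\pi^{-1}(\mathrm{int}\,\sigma)=\mathrm{int}\,\sigma\times(Y-\sigma)^{t}_\Delta$. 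Directly from the definitions, $Y-\sigma$ is $(t-1)$-complementary $(m-\dim\sigma)$-acyclic: any $i\le t-1$ pairwise disjoint faces of $Y-\sigma$ together with $\sigma$ form $i+1\le t$ pairwise disjoint faces of $Y$, to which the hypothesis on $Y$ applies. Hence, by the inductive hypothesis, the fiber $(Y-\sigma)^t_\Delta$ is $(m-\dim\sigma)$-acyclic.

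\textbf{Degree bookkeeping.} The map $\pi$ is not a bundle, but it is a trivial bundle over each open stratum, so I would run the spectral sequence of the filtration of $Y^{t+1}_\Delta$ by the preimages $\pi^{-1}(Y^{(k)})$ of the skeleta of $Y$. Its $E_1$-term is $\bigoplus_{\dim\sigma=k}H^{q}\big((Y-\sigma)^t_\Delta\big)$ placed in total degree $k+q$, the shift by $k$ arising from $H^*_c(\mathrm{int}\,\sigma)=H^*_c(\R^k)$. Since each fiber is $(m-k)$-acyclic, an entry $(k,q)$ with $q\ge1$ and $k+q\le m$ satisfies $q\le m-k$ and is therefore zero; and one checks that no differential maps into the row $q=0$ in total degrees $\le m$. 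Thus in total degrees $\le m$ only the row $q=0$ survives; this row is the simplicial cochain complex of $Y$ with $d_1$ the coboundary, so $H^N(Y^{t+1}_\Delta)\cong H^N(Y)$ for $N\le m$. As $Y$ is $m$-acyclic, so is $Y^{t+1}_\Delta$. Applying this with $Y=X$, $t=r-1$, $m=n$ shows that $E$ is $n$-acyclic. This degree-matching for a non-bundle map — the dimension $k$ of a stratum exactly cancelling the acyclicity defect of its fiber — is the heart of the argument and the step I expect to require the most care.

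\textbf{Conclusion.} At this point $E$ is an $n$-acyclic space carrying a fixed-point-free action of $G=(\Z_a)^m$, while $S(W)=S^n$ is an $n$-dimensional sphere with a fixed-point-free $G$-action. By Volovikov's lemma \cite{vo96} there is no $G$-map $E\to S(W)$, contradicting the test map produced in the first paragraph. Hence no $(r-1)$-complementary $n$-acyclic complex admits an almost $r$-embedding to $\R^d$.
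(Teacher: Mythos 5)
Your proposal follows essentially the same route as the paper: the same configuration space (your $X^r_\Delta$ coincides with the paper's $X^{\underline r}$), the same reduction of an almost $r$-embedding to a $G$-equivariant map $X^r_\Delta\to S^n$ for $G=(\Z_a)^m$ (which the paper just cites from the surveys), the same appeal to Volovikov's lemma, the same induction on the number of factors via the projection to the first coordinate, and the same spectral sequence of the skeletal filtration; you work in cohomology (with compact supports on open strata) where the paper works in relative homology, which is immaterial over $\Z_a$.

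There is, however, one genuine imprecision, at exactly the point where the paper takes explicit care. You assert that the row $q=0$ of $E_1$ ``is the simplicial cochain complex of $Y$''. That identification uses $H^0(F_\sigma)\cong\Z_a$, i.e.\ connectedness of the fiber $F_\sigma=(Y-\sigma)^t_\Delta$, which your hypothesis guarantees only for $\dim\sigma\le m$; over an $(m+1)$-face the fiber is merely $(-1)$-acyclic, i.e.\ non-empty, and may be disconnected, so $E_1^{m+1,0}=\bigoplus_\sigma H^0(F_\sigma)$ can be strictly larger than $C^{m+1}(Y)$. This matters: to obtain $E_2^{m,0}=H^m(Y)$ --- the top degree $N=m$ of your conclusion, which you genuinely need, both to feed the induction and because Volovikov's lemma requires full $n$-acyclicity --- you must compute $\ker\bigl(d_1:E_1^{m,0}\to E_1^{m+1,0}\bigr)$, and for that you must know what sits in position $(m+1,0)$, which is outside the range where your identification holds. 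The repair is short: non-emptiness of the fibers over $(m+1)$-faces makes the natural map $\iota:C^{m+1}(Y)\to E_1^{m+1,0}$, $\tau^*\mapsto 1_{F_\tau}$, injective, and $d_1$ on the row $q=0$ factors as $\iota\circ\delta$ with $\delta$ the simplicial coboundary, so $\ker d_1=\ker\delta$ and $E_2^{m,0}=H^m(Y)$ after all. This is precisely the step the paper handles (dually, in homology) by observing that $E^1_{n+1,0}$ has $C_{n+1}(X)$ as a quotient group and that the differential out of it is the composition of the quotient map with the boundary map. With this point added, your argument is complete and agrees with the paper's.
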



Set
\[
X^{\underline r} := \cup \{ \sigma_1 \times \cdots \times \sigma_r
\ : \sigma_i \textrm{ is a face of }X,\ \sigma_i \cap \sigma_j = \emptyset \mbox{ for every }i \neq j \}.
\]
This is a topological analogue of the set of arrangements, so we use the notation analogous to
$x^{\underline r}=x(x-1)\ldots(x-r+1)$.

Let $\pi:X^{\underline r}\to X$ be the restriction of the projection $X^r\to X$.

\begin{proof}[Proof of Theorem \ref{t:tvco}]
The group $G:=\Z_a^m$ acts on $X^{\underline r}$ by permutations of the $r$ points, the permutations corresponding to the action of $G$ on itself by left shifts.
Thus $G$ can be considered as a subgroup of the group of permutation of $r=|G|$ elements.
Hence it is clear and well-known (see e.g. surveys \cite[Lemma 3.9]{BZ16}, \cite[Lemma 2.3]{Sk16}) that

\emph{if there is an almost $r$-embedding $X\to\R^d$, then there are a fixed points free action of $G$ on $S^n$ and a $G$-equivariant map $X^{\underline r}\to S^n$}.

Apply the following result \cite[Lemma]{vo96} for $k=n$ and $Y=X^{\underline r}$.

\emph{Suppose that $G$ acts on $Y$ and on $S^k$ without fixed points, and $Y$ is $k$-acyclic.
Then there does not exist a $G$-equivariant map $Y\to S^k$.}

We see that it suffices to show that \emph{$X^{\underline r}$ is $n$-acyclic.}

We show this by induction on $r$.
The base $r=1$ follows because $X^{\underline 1}=X$.
Let us prove the inductive step.
Since $X$ is $(r-1)$-complementary $n$-acyclic, for each face $\sigma$ of $X$ the complex $X-\sigma$ is $(r-2)$-complementary $(n-\dim\sigma)$-acyclic.
Hence by the inductive hypothesis $(X-\sigma)^{\underline{r-1}}$ is $(n-\dim\sigma)$-acyclic.
Now the inductive step holds by the following Lemma \ref{l:crcon} because $X$ is $n$-acyclic.
\end{proof}

\begin{lemma}\label{l:crcon} If $F_\sigma:=(X-\sigma)^{\underline{r-1}}$ is $(n-\dim\sigma)$-acyclic for every face $\sigma$ of a complex $X$,
then $H_s(X^{\underline r})\cong H_s(X)$ for every $s\le n$.
\end{lemma}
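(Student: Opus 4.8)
The plan is to study the projection $\pi\colon X^{\underline r}\to X$ through its fibers and to feed the resulting local information into the Mayer--Vietoris spectral sequence of the cover of $X$ by open stars of vertices.

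First I would compute the fibers. A point $(y_1,\ldots,y_r)$ lies in $X^{\underline r}$ exactly when the carriers (minimal faces) of $y_1,\ldots,y_r$ are pairwise disjoint; so, writing $\sigma$ for the carrier of $x$, the fiber $\pi^{-1}(x)$ consists of the tuples $(x,y_2,\ldots,y_r)$ whose last $r-1$ carriers are pairwise disjoint and disjoint from $\sigma$, i.e.\ $\pi^{-1}(x)=F_\sigma=(X-\sigma)^{\underline{r-1}}$. I then claim the stronger local statement that $\pi^{-1}(\st\sigma)$ deformation retracts onto $F_\sigma$ for every face $\sigma$: pushing the first coordinate along the straight segment to the barycenter $\hat\sigma$ while keeping $y_2,\ldots,y_r$ fixed stays inside $X^{\underline r}$, since for $x\in\st\sigma$ with carrier $\tau\supseteq\sigma$ the moving point has carrier contained in $\tau$ at all times, and the (fixed) carriers of $y_2,\ldots,y_r$ are disjoint from $\tau$. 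Because $\st(v_0)\cap\cdots\cap\st(v_p)$ equals $\st\sigma$ when $\sigma=\{v_0,\ldots,v_p\}$ is a face and is empty otherwise, the open cover $\{\pi^{-1}(\st v)\}_v$ of $X^{\underline r}$ has the same nerve as the open star cover of $X$, namely $X$ itself, and its $(p+1)$-fold intersections are homotopy equivalent to the $F_\sigma$ with $\dim\sigma=p$.

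Next I would run the homology Mayer--Vietoris spectral sequence of this cover over the field $\Z_a$,
$$E^1_{p,q}=\bigoplus_{\dim\sigma=p}H_q(F_\sigma)\ \Longrightarrow\ H_{p+q}(X^{\underline r}),$$
with $d^1$ the \v{C}ech differential on each row. The hypothesis that $F_\sigma$ is $(n-\dim\sigma)$-acyclic says precisely that $\widetilde H_q(F_\sigma)=0$ for $q\le n-\dim\sigma$, i.e.\ $H_q(F_\sigma)=0$ whenever $q\ge1$ and $p+q\le n$; thus in total degree $\le n$ everything above the bottom row vanishes. On the bottom row, $H_0(F_\sigma)=\Z_a$ for $\dim\sigma\le n$ (acyclic spaces are connected), and I would check that $d^1$ is the simplicial boundary: for $\tau\subset\sigma$ the restriction $H_0(F_\sigma)\to H_0(F_\tau)$ is the augmentation times the incidence number, because the inclusion $F_\sigma\hookrightarrow F_\tau$ lands in the connected space $F_\tau$. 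Hence $(E^1_{\bullet,0},d^1)$ agrees with $C_\bullet(X;\Z_a)$ in the relevant range, so $E^2_{p,0}=H_p(X)$ for $p\le n$.

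Finally I would read off total degree $s\le n$. All entries with $q\ge1$ and $p+q\le n$ already vanish on $E^1$, and every $d^r$ with $r\ge2$ into or out of $(s,0)$ hits a group with $q\ge1$ and total degree $\le n$, hence zero; so $E^\infty_{s,0}=E^2_{s,0}=H_s(X)$ while all other $E^\infty_{p,q}$ with $p+q=s$ vanish. Over the field $\Z_a$ there are no extension problems, giving $H_s(X^{\underline r})\cong H_s(X)$ for $s\le n$. I expect the main obstacle to be the local analysis of $\pi$—showing $\pi^{-1}(\st\sigma)\simeq F_\sigma$ and that the pulled-back cover has the claimed intersections—since this is where the combinatorics of disjoint carriers in the deleted product enters; once the $E^1$ page is in hand, the vanishing and the bottom-row identification are routine. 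A secondary point to treat carefully is that the fibers $F_\sigma$ over faces of dimension $>n$ need not be connected, but this does not affect the computation because the relevant $d^1$ maps factor through the augmentation into connected fibers.
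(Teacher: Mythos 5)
Your proposal is correct, and its endgame coincides with the paper's: vanishing of $E^1_{p,q}$ for $q\ge1$, $p+q\le n$; identification of the bottom row with the simplicial chains of $X$ through degree $n$; the observation that column $p=n+1$ only needs to surject onto $C_{n+1}(X)$ via augmentations of nonempty (possibly disconnected) fibers; and collapse at $E^2$ in total degree $\le n$. What is genuinely different is how you build the spectral sequence. The paper filters $X^{\underline r}$ by the $\pi$-preimages of the skeleta of $X$ and computes $E^1_{p,q}=H_{p+q}\bigl(\pi^{-1}(X^{(p)}),\pi^{-1}(X^{(p-1)})\bigr)\cong\bigoplus_\sigma H_q(F_\sigma)$ by excision, using that over the interior of a $p$-face $\sigma$ the preimage is (interior of $\sigma$)$\,\times F_\sigma$. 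You instead take the Mayer--Vietoris (\v{C}ech) spectral sequence of the open cover $\{\pi^{-1}(\st v)\}_v$ and identify $\pi^{-1}(\st\sigma)\simeq F_\sigma$ by a straight-line deformation retraction; that retraction is valid, since for $x\in\st\sigma$ with carrier $\tau\supseteq\sigma$ the segment from $x$ to $\hat\sigma$ stays in the interior of $\tau$ until time $1$, so the carrier of the moving point stays inside $\tau$ and disjointness from the fixed carriers of $y_2,\dots,y_r$ is preserved. The two routes are parallel Leray-type arguments of comparable length: yours trades the paper's excision step (the set $R$ in its $E^1$ computation) for the deformation retraction plus homotopy invariance of the \v{C}ech $E^1$-page, replacing a local product structure by an honest homotopy equivalence of open sets, at the cost of invoking the open-cover spectral sequence machinery.

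One claim you should state more carefully: the nerve of $\{\pi^{-1}(\st v)\}_v$ need not be all of $X$, because $\pi^{-1}(\st\sigma)=\emptyset$ exactly when $F_\sigma=\emptyset$, which can happen for faces of dimension greater than $n+1$ (the hypothesis guarantees nonemptiness only for $\dim\sigma\le n+1$). This is harmless for your argument --- empty intersections contribute $0$, so $E^1_{p,q}=\bigoplus_{\dim\sigma=p}H_q(F_\sigma)$ still holds, and your bottom-row analysis only uses faces of dimension $\le n+1$ --- but the assertion that the nerve is ``$X$ itself'' should be weakened to ``the subcomplex of faces $\sigma$ with $F_\sigma\ne\emptyset$,'' which changes nothing in degrees $\le n+1$.
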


\begin{proof} Consider
\footnote{\label{f:spectr} This proof is an exercise on spectral sequences analogous to \cite[\S21.1B]{FF89}. In particular, construction of $E^1$ mimicks construction of $E^1$ for the spectral sequence of a bundle, although $\pi$ need not be even the Serre fibration.
\newline
Lemma \ref{l:crcon} and the construction of $E^1$ in its proof are clarified versions of \cite[Lemma 4.4 Proposition 4.3]{HKTT} (omitting unnecessary hocolimits).
In \cite[\S4]{HKTT} the construction of $E^1$ (sufficient for the main result) was implicitly stated (before Proposition 4.3), and was used to deduce Proposition 4.3 (stated in the hocolimits language unnecessary for the main result).}
the $\pi$-preimage of the skeletal filtration $X^{(0)}\subset X^{(1)}\subset\ldots$ of $X$.
One obtains the spectral sequence \cite[\S20.2]{FF89} with
$$E^1_{p,q} = H_{p+q}(\pi^{-1}(X^{(p)},\pi^{-1}(X^{(p-1)})) \cong
\oplus H_{p+q}(\sigma\times F_\sigma\cup R, \pi^{-1}(\partial\sigma)) \cong$$
$$\oplus H_{p+q}(\sigma\times F_\sigma,\partial \sigma\times F_\sigma) \cong\oplus H_q(F_\sigma).$$
Here the sums are over all $p$-faces $\sigma$ of $X$, and $R=\pi^{-1}(\partial\sigma)\backslash(\partial\sigma\times F_\sigma)$.
By the acyclicity we have
$$E^1_{p,0}=C_p(X)\quad\text{for}\quad p\le n,\quad
E^1_{p,q}=0\quad\text{for}\quad p+q\le n\quad\text{and}\quad q>0.$$
Since being $(-1)$-acyclic means being non-empty, the group $E^1_{n+1,0}$ has a quotient group $C_{n+1}(X)$.
The differential $E^1_{p,0}\to E^1_{p-1,0}$ is the boundary map $C_p(X)\to C_{p-1}(X)$ for $p\le n$,
and is the composition $E^1_{n+1,0}\to C_{n+1}(X)\to C_n(X)$ of the quotient and the boundary maps for $p=n+1$.
Hence
$$E^2_{p,0}=H_p(X)\quad\text{for}\quad p\le n,\quad\text{and}\quad
E^2_{p,q}=0\quad\text{for}\quad p+q\le n\quad\text{and}\quad q>0.$$
Then $E^2_{p,q}=E^\infty_{p,q}$ for $p+q\le n$.
Therefore $H_s(X^{\underline r})\cong H_s(X)$ for every $s\le n$.
\end{proof}




\begin{remark}\label{r:hktt}
\aronly{(a)} In spite of being much shorter, the above proof of Theorem \ref{t:tvco} is not an alternative proof compared to \cite{HKTT} but is just a different exposition avoiding artificially sophisticated language, and applying \cite[Lemma]{vo96} instead of rewriting its proof in a slightly more sophisticated way.
Analogously, the proofs of \cite{KM21} could be simplified (except possibly for the chirality result).

\aronly{
(b) In \cite[\S2, proof of Proposition 2.3]{HKTT} instead of  \cite[Lemma 2.2]{HKTT} one can use the following known result (similar to \cite[Lemma 2.2]{HKTT}):

\emph{If any intersection of some of $n$ complexes (including intersections involving only one complex)
is contractible (including empty), then $H^j$  of the union is zero for $j\ge n-1$.}


(c) In \cite[end of the paragraph after Theorem 1.1]{HKTT} the phrase \emph{`Frick proved [13] that the condition that $r$ is a prime power is necessary'} is misleading as explained in \cite[Remark 1.9, \S3.1, \S5]{Sk16}.
The authors should have cited \cite{Sk16} (for an accurate description of references
concerning the counterexample to the topological Tverberg conjecture), and \cite{MW15}, \cite[\S3]{Sk16} (for the detailed version of the original proof, and for a published simplified survey exposition of the main ingredient to the counterexample).}
\end{remark}


A complex $A$ is said to be a \emph{refinement} of a complex $B$ if there is a homeomorphism $|A|\to|B|$ (called refining homeomorphism) between their bodies such that every face of $B$ is the union of images of some faces of $A$.

\begin{lemma}\label{l:subdiv}
(a) Any complex PL homeomorphic to the boundary $\partial\Delta_{n+1}$ of the simplex is a refinement of $\partial\Delta_{n+1}$.

(b) For any complex $D$ PL homeomorphic to $\Delta_n$ any refining homeomorphism
$h:\partial D\to\partial\Delta_n$ extends to a  homeomorphism $D\to\Delta_n$.
\end{lemma}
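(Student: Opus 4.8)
The two parts are of quite different character, so I would treat them separately: part (b) is a soft extension statement, while part (a) is where the real work lies.

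For part (b) the plan is the Alexander trick. Fix a PL homeomorphism $\phi\colon D\to\Delta_n$, which exists by hypothesis, and set $f:=h\circ(\phi|_{\partial D})^{-1}$, a self-homeomorphism of $\partial\Delta_n$. Using that $\Delta_n$ is star-shaped about its barycentre $b$, so that every point is uniquely $b+t(y-b)$ with $y\in\partial\Delta_n$ and $t\in[0,1]$, define the radial cone $F(b+t(y-b)):=b+t(f(y)-b)$. Then $F\colon\Delta_n\to\Delta_n$ is a homeomorphism with $F|_{\partial\Delta_n}=f$, and $F\circ\phi\colon D\to\Delta_n$ is a homeomorphism whose restriction to $\partial D$ is $h$. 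Note that this uses only that $h$ is a homeomorphism, not that it is refining.

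For part (a) I would first reformulate the conclusion combinatorially. Giving a refining homeomorphism $h\colon|A|\to|\partial\Delta_{n+1}|$ is the same as giving a family of subcomplexes $C_\tau:=h^{-1}(\tau)$ of $A$, one for each face $\tau$ of $\partial\Delta_{n+1}$, such that each $C_\tau$ is a PL ball with $\dim C_\tau=\dim\tau$ and $\tau\mapsto C_\tau$ is an isomorphism of face posets; in other words a regular cell decomposition of $|A|$ combinatorially isomorphic to $\partial\Delta_{n+1}$ and refined by the triangulation $A$. Given such a scaffold, I would assemble $h$ by induction on the skeleta of $\partial\Delta_{n+1}$: send each $C_v$ to the vertex $v$, and, once $h$ is defined on $\partial C_\tau$ as a refining homeomorphism onto $\partial\tau$, extend it over the ball $C_\tau$ to a homeomorphism onto $\tau$ by part (b). Thus part (b) performs all of the cell-by-cell filling, and everything reduces to producing the scaffold $\{C_\tau\}$.

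To produce the scaffold I would start from any PL homeomorphism $f\colon|A|\to|\partial\Delta_{n+1}|$: the preimages $f^{-1}(\tau)$ already form a decomposition with exactly the right face poset, each piece being a PL ball of the correct dimension. The difficulty — and this is the main obstacle — is that these pieces are subcomplexes only of some subdivision $A'$ of $A$, not of $A$ itself, whereas the definition of refinement forces the $C_\tau$ to be honest subcomplexes of $A$. My plan is to correct this by induction on dimension, moving the $k$-dimensional walls $f^{-1}(\tau)$, one skeleton at a time and in general position with respect to $A$, into the $k$-skeleton of $A$, so that each becomes a union of $k$-faces of $A$; since the corrections are ambient isotopies, the complementary pieces remain balls and the face poset is unchanged. (Equivalently, one can use the splitting $A=\sigma\cup C$ into a top simplex $\sigma$, mapped onto one facet, and the complementary PL ball $C$, and reduce to realizing the cone cells over $\partial C\cong\partial\Delta_n$ as subcomplexes of $C$, the top cells again being filled by part (b).) This alignment of a fixed coarse cell pattern with an arbitrary triangulation, keeping the pieces simultaneously subcomplexes and balls, is the only step requiring genuine PL general-position technology; I expect it to be the crux, whereas the reduction above and the repeated appeals to part (b) are routine.
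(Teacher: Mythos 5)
Your part (b) is correct and is essentially the paper's own argument: the paper writes the Alexander trick as a composition of coning maps
$D \overset{F}\to \Delta_n \to c\partial\Delta_n \to c\partial D \to c\partial\Delta_n \to \Delta_n$,
which is exactly your radial cone of $f=h\circ(\phi|_{\partial D})^{-1}$ with apex the barycentre; and indeed neither argument uses that $h$ is refining.

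Part (a) is where you have a genuine gap. Your reduction of (a) to producing a ``scaffold'' $\{C_\tau\}$ of subcomplexes of $A$ with the face poset of $\partial\Delta_{n+1}$, followed by cell-by-cell filling via (b), is sound. But the step that produces the scaffold --- ``moving the $k$-dimensional walls $f^{-1}(\tau)$, one skeleton at a time and in general position with respect to $A$, into the $k$-skeleton of $A$'' --- is not an argument. General position is a technique for pushing polyhedra \emph{off} low-dimensional skeleta (making intersections have expected dimension); it gives no way to push a $k$-dimensional polyhedron \emph{into} the $k$-skeleton of a fixed triangulation, which is a maximally non-generic position. There is no standard PL lemma doing this, and none could be soft: arranging that every wall is an honest union of faces of the arbitrary fixed complex $A$ --- simultaneously for all walls, keeping them embedded and pairwise disjoint except along already-placed lower walls (which must stay fixed, so the isotopy is relative), and keeping the complementary pieces balls --- is precisely the content of Lemma (a). You have reformulated the statement, not proved it. The same objection applies to your parenthetical variant: ``realizing the cone cells over $\partial C$ as subcomplexes of $C$'' is again the whole problem, since $C$ is an arbitrary triangulated PL ball with no interior cone structure in sight.

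For comparison, the paper gets the scaffold for free from the simplicial structure by inducting on dimension at a vertex rather than working globally: pick a vertex $v$ of the complex; its link is a PL $(n-1)$-sphere, so by the inductive hypothesis $\lk v$ refines $\partial\Delta_n$; since $\st v=v*\lk v$ \emph{is} simplicially a cone over the link, the coned homeomorphism makes $\st v$ a refinement of the cone $\partial\Delta_{n+1}\setminus\Int\Delta_n$ with its cone cells automatically unions of faces; and the closed complement $D$ of the open star is a PL $n$-ball by the complementary-ball theorem \cite[3.13]{RS72}, so part (b) extends the refining homeomorphism $\lk v=\partial D\to\partial\Delta_n$ over $D$, the extension being automatically refining because $D$ maps onto a single facet. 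If you try to repair your ``general position'' step, you will be led to some such induction --- which is the paper's proof.
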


\begin{proof} (b) Take a PL homeomorphism $F:D\to\Delta_n$.
Denote by $cX$ the cone over a complex $X$.
Take a homeomorphism $C:\Delta_n\to c\partial\Delta_n$.
Then $C^{-1}\circ ch\circ cF|_{\partial D}\circ C\circ F$ is the required extension:
$$D \overset{F}\to \Delta_n \overset{C}\to c\partial\Delta_n \overset{cF|_{\partial D}}\to c\partial D \overset{ch}\to c\partial\Delta_n  \overset{C^{-1}}\to\Delta_n.$$


(a) Induction on $n$.
The case $n=0$ is trivial.
Let us prove the inductive step $n-1\to n$.
Take any vertex $v$ of the complex.
The link $\lk v$ is PL homeomorphic to $\partial\Delta_n$.
So by the induction hypothesis $\lk v$ is a refinement of $\partial\Delta_n$.
Then $\st v$ is the conical refinement of the complement in $\partial\Delta_{n+1}$ to the interior of $\Delta_n$.
Denote by $D$ the complement in the given complex to the open star of $v$.
By \cite[3.13]{RS72} $D$ is homeomorphic to $\Delta_n$.
So by (b) the refining homeomorphism $\lk v\to\partial\Delta_n$ extends to a homeomorphism $D\to\Delta_n$, which is automatically refining.\footnote{The above proofs of (a,b) are analogous to \cite[p. 200]{Gr03} (they use \cite[3.13]{RS72} instead of  central projection).
\aronly{\newline
It was noted in \cite[before question 1.2]{HKTT} that \emph{if $r$ is a prime power, then no boundary of a convex $(d+1)(r-1)$-polytope has an almost $r$-embeddings to $\R^d$}, and that this immediately follows from the topological Tverberg theorem because \cite[p. 200]{Gr03} \emph{the boundary of a convex $(n+1)$-polytope is a refinement of $\partial\Delta_{n+1}$.}
\newline
Each one of (a), (b) implies that \emph{any complex $D$ PL homeomorphic to $\Delta_n$ is a refinement of $\Delta_n$} (absolute version of (b)).
In the above proof of (a) we need (b), not its absolute version.}}
\end{proof}



\begin{thebibliography}{RSS95}



\bibitem[BZ16]{BZ16} * \emph{P. V. M. Blagojevi\'c and G. M. Ziegler,} Beyond the Borsuk-Ulam theorem: The topological Tverberg story, in: A Journey Through Discrete Mathematics, Eds. M. Loebl,
J. Ne\v set\v ril, R. Thomas, Springer, 2017, 273--341. arXiv:1605.07321v3.


\bibitem[FF89]{FF89} * \emph{A.T. Fomenko and D.B. Fuchs.} Homotopical Topology, Springer, 2016.

\bibitem[Gr03]{Gr03} * \emph{B. Gr\"unbaum.} Convex polytopes, 2nd ed., Graduate Texts in Mathematics, vol. 221, Springer, New York, 2003.

\bibitem[HKTT]{HKTT} \emph{S. Hasui, D. Kishimoto, M. Takeda, M. Tsutaya.}  Tverberg's theorem for cell complexes, Bull. LMS, 55:4 (2023), 1944--1956. arXiv:2101.10596.

\bibitem[KM21]{KM21} \emph{D. Kishimoto, T. Matsushita.} Van Kampen-Flores theorem for cell complexes, Discrete Comput. Geom., to appear. arXiv:2109.09919.

\aronly{
\bibitem[MW15]{MW15} \emph{I. Mabillard and U. Wagner.}
Eliminating Higher-Multiplicity Intersections, I. A Whitney Trick for Tverberg-Type Problems. arXiv:1508.02349.}


\bibitem[RS72]{RS72} * \emph{C. P. Rourke and B. J. Sanderson,}
\newblock Introduction to Piecewise-Linear Topology,
\newblock \emph{Ergebn.\ der Math.} 69, Springer-Verlag, Berlin, 1972.


\bibitem[Sk16]{Sk16} * \emph{A. Skopenkov,} A user's guide to the topological Tverberg Conjecture, arXiv:1605.05141v5. Abridged earlier published version: Russian Math. Surveys, 73:2 (2018), 323--353.

\bibitem[SZ24]{SZ24} \emph{P. Sober\'on, S. Zerbib.} The B\'ar\'any-Kalai conjecture for certain families of
    polytopes, arXiv:2404.11533.



\bibitem[Vo96]{vo96} \emph{A. Yu. Volovikov,} On a topological generalization of the Tverberg theorem. Math. Notes 59:3 (1996), 324--326.


{\it Books, surveys and expository papers in this list are marked by the stars.}

\end{thebibliography}
\end{document}